\numberwithin{equation}{section}
\theoremstyle{plain}
\newtheorem{lemma}{Lemma}[section]
\newtheorem{theorem}[lemma]{Theorem}
\newtheorem{proposition}[lemma]{Proposition}
\newtheorem{corollary}[lemma]{Corollary}
\theoremstyle{definition} 
\newtheorem{example}[lemma]{Example}
\theoremstyle{remark}
\newtheorem{remark}[lemma]{Remark} \newtheorem*{Claim1}{Claim 1}
\newtheorem*{Claim2}{Claim 2} \newtheorem*{Claim3}{Claim 3}
\newtheorem*{Claim4}{Claim 4} \newtheorem*{Claim5}{Claim 5}
 \newtheorem*{Baer}{Baer's criterion}
\def\urltilda{\kern -.15em\lower .7ex\hbox{\~{}}\kern .04em}
\newcommand{\cosupp}{\operatorname{cosupp}}
\newcommand{\Ext}{\operatorname{Ext}}
\newcommand{\hh}{\operatorname{H}} \newcommand{\zz}{\operatorname{Z}}
\newcommand{\bb}{\operatorname{B}}
\newcommand{\Hom}{\operatorname{Hom}}
\newcommand{\injdim}{\operatorname{inj\,dim}}
\newcommand{\projdim}{\operatorname{proj\,dim}}
 \renewcommand{\le}{\leqslant}
\renewcommand{\ge}{\geqslant}
\newcommand{\RHom}{\operatorname{{\mathbf R}Hom}}
\newcommand{\Spec}{\operatorname{Spec}}
\newcommand{\supp}{\operatorname{supp}}
\newcommand{\Tor}{\operatorname{Tor}}
\newcommand{\splf}{\operatorname{splf}}
\newcommand{\gs}{{\scriptscriptstyle{>}}}
\newcommand{\les}{{\scriptscriptstyle\leqslant}}
\newcommand{\lotimes}{\otimes^{\mathbf L}}
 \newcommand{\xra}{\xrightarrow}
\newcommand{\lra}{\longrightarrow}
  \def\mcU{\mathcal{U}}
\def\bbQ{\mathbb Q} \def\bbZ{\mathbb Z}
\newcommand{\fa}{\mathfrak{a}} \newcommand{\fb}{\mathfrak{b}}
 \newcommand{\fp}{\mathfrak{p}}
\title[Tests for injectivity of modules over commutative rings]{Tests
  for injectivity of modules\\ over commutative rings}
\author[L.\,W. Christensen]{Lars Winther Christensen}
\address{L.W.C. Texas Tech University, Lubbock, TX 79409, U.S.A.}
\email{lars.w.christensen@ttu.edu}
\urladdr{http://www.math.ttu.edu/\urltilda lchriste}
\author[S.\,B. Iyengar]{Srikanth B. Iyengar}
\address{S.B.I. University of Utah, Salt Lake City, UT 84112, U.S.A.}
\email{iyengar@math.utah.edu}
\urladdr{http://www.math.utah.edu/\urltilda iyengar}
\thanks{We thank the Centre de Recerca Matem\`atica, Barcelona, for
  hospitality during visits in Spring 2015, when part of the work
  reported in this article was done.  L.W.C.\ was partly supported by
  NSA grant H98230-14-0140, and S.B.I.\ was partly supported by NSF
  grant DMS-1503044.}
\date{15 June 2016}
\keywords{Injective module, injective dimension, cosupport.}
\subjclass[2010]{13C11; 13D05}
\begin{document}

\begin{abstract}
  It is proved that a module $M$ over a commutative noetherian ring
  $R$ is injective if $\mathrm{Ext}_{R}^{i}((R/{\mathfrak
    p})_{\mathfrak p},M)=0$ holds for every $i\ge 1$ and every prime ideal
  $\mathfrak{p}$ in~$R$. This leads to the following characterization
  of injective modules: If $F$ is faithfully flat, then a module $M$
  such that $\Hom_R(F,M)$ is injective and $\Ext^i_R(F,M)=0$ for all
  $i\ge 1$ is injective. A limited version of this characterization is
  also proved for certain non-noetherian rings.
\end{abstract}

\maketitle

\section{Introduction}
Let $R$ be a commutative ring. In terms of cohomology, Baer's
criterion asserts that an $R$-module $M$ is injective if (and only if)
$\Ext_R^1(R/\fa,M)=0$ holds for every ideal $\fa$ in $R$. When $R$ is
also noetherian, it suffices to test against prime ideals and locally,
namely, $M$ is injective if either of the following conditions holds:
\begin{itemize}
\item $\Ext_{R}^{1}(R/\fp,M)=0$ for every prime ideal $\fp$ in $R$;
\item $\Ext_{R_\fp}^1(k(\fp),M_\fp)=0$ for every prime ideal $\fp$ in
  $R$.
\end{itemize}
Here, and henceforth, $k(\fp)$ denotes the field $(R/\fp)_\fp$. The
main result of this paper is that injectivity can be detected by
vanishing of Ext globally against these fields.

\begin{theorem}
  \label{thm:injdim}
  Let $R$ be a commutative noetherian ring and let $M$ be an
  $R$-complex. If for some integer $d$, one has
  \begin{equation*}
    \Ext^{i}_{R}(k(\fp),M)=0 \quad \text{for every prime ideal $\fp$ in
      $R$ and all $i>d$}\:,
  \end{equation*}
  then the injective dimension of $M$ is at most $d$.
\end{theorem}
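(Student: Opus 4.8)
My plan is to deduce the bound on injective dimension from the classical prime‑ideal form of Baer's criterion: over a noetherian ring one has $\injdim_R M\le d$ exactly when $\Ext^{i}_R(R/\fp,M)=0$ for all primes $\fp$ and all $i>d$. After a preliminary reduction to the case where the homology of $M$ is bounded (which is what makes this criterion available), the whole problem becomes showing that the hypothesis, phrased with the residue fields $k(\fp)=(R/\fp)_\fp$, forces the same vanishing for the finitely generated modules $R/\fp$. I would prove $\Ext^{>d}_R(R/\fp,M)=0$ by induction on the coheight $\dim R/\fp$. The base case is $\dim R/\fp=0$, i.e. $\fp$ maximal; there $R/\fp$ is a field, so $k(\fp)=R/\fp$ and the desired vanishing is precisely the hypothesis at $\fp$.

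For the inductive step fix $\fp$ with $\dim R/\fp>0$ and assume $\Ext^{>d}_R(R/\fq,M)=0$ for every prime $\fq$ with $\dim R/\fq<\dim R/\fp$. Since $R/\fp$ is a domain with fraction field $k(\fp)$, there is a short exact sequence $0\to R/\fp\to k(\fp)\to T\to 0$ with $T=k(\fp)/(R/\fp)$. Applying $\RHom_R(-,M)$ and using $\Ext^{i}_R(k(\fp),M)=0$ for $i>d$, the connecting maps give isomorphisms $\Ext^{i}_R(R/\fp,M)\cong\Ext^{i+1}_R(T,M)$ for every $i>d$, so it suffices to prove $\Ext^{j}_R(T,M)=0$ for $j\ge d+2$.

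Here $T=\colim_{s\notin\fp}R/(\fp+sR)$, the transition maps coming from the inclusions $\tfrac1s(R/\fp)\subseteq\tfrac1{s'}(R/\fp)$ inside $k(\fp)$. Each $R/(\fp+sR)$ is finitely generated and supported on primes that strictly contain $\fp$, hence of coheight $<\dim R/\fp$; a prime filtration and the induction hypothesis give $\Ext^{>d}_R(R/(\fp+sR),M)=0$. Now $\RHom_R(T,M)\simeq\operatorname{holim}_{s}\RHom_R(R/(\fp+sR),M)$, and the spectral sequence ${\lim_s}^{p}\Ext^{q}_R(R/(\fp+sR),M)\Rightarrow\Ext^{p+q}_R(T,M)$ has its $E_2$‑page concentrated in the columns $q\le d$. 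Consequently $\Ext^{j}_R(T,M)$ with $j\ge d+2$ can receive contributions only from terms ${\lim_s}^{p}(-)$ with $p\ge 2$.

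This last point is where I expect the real difficulty to lie: the directed set $R\setminus\fp$ need not have countable cofinality, so a priori the higher derived limits ${\lim}^{\ge 2}$ do not vanish and could create spurious classes in $\Ext^{\ge d+2}_R(T,M)$. (The term ${\lim}^{1}$, which lands in degree $d+1$, is harmless, as the isomorphism above uses only degrees $\ge d+2$.) Controlling these limits is the crux; I would try to show the relevant inverse systems are Mittag–Leffler, or replace the presentation of $k(\fp)$ by a cofinal subsystem with good vanishing, or—most robustly—recast the argument through the local‑homology and cosupport formalism, which is built precisely to manage such limits. It should be stressed that the passage through the non‑finitely generated modules $k(\fp)$ and $T$ is unavoidable: for non‑finitely generated modules Bass's monotonicity of Bass numbers along the spectrum fails—already over $\mathbb{Z}$ the nonzero Bass number of $\mathbb{Q}$ at the generic point does not propagate to any closed point—so the residue fields at the non‑maximal primes must genuinely enter, and this is exactly what forces the colimit, and with it the derived limits, into the proof.
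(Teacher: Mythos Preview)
There are two gaps. The minor one: induction on $\dim R/\fp$ is not well-founded over an arbitrary noetherian ring, since Nagata's examples have primes with $\dim R/\fp=\infty$; you must switch to noetherian induction on the closed sets $V(\fp)$, which is harmless since your inductive step only ever invokes the hypothesis for primes $\fq\supsetneq\fp$. The serious gap is the one you flag yourself: the derived-limit terms ${\lim}^{p}$ with $p\ge 2$ in your spectral sequence have no reason to vanish, and none of your proposed remedies works. Mittag--Leffler conditions kill only ${\lim}^{1}$; there is no reason $R\setminus\fp$ should admit a countable cofinal subset; and appealing to the cosupport formalism is circular in this paper, because the detection statement $\cosupp_R M=\emptyset\Rightarrow\hh(M)=0$ is \emph{deduced} here from the very theorem you are trying to prove. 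One can import it from the Benson--Iyengar--Krause machinery instead, but that is a far heavier route than the direct argument the paper gives.

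The paper's proof sidesteps derived limits entirely. It too uses noetherian induction, by choosing $\fp$ maximal among ideals $\fa$ with $\Ext^{>d}_R(R/\fa,M)\ne 0$, but rather than writing $k(\fp)$ as a colimit it works with $X=\RHom_R(R/\fp,M)$. Maximality gives $\Ext^{>d}_R(R/(\fp+(x)),M)=0$ for every $x\notin\fp$, and the exact sequence $0\to R/\fp\xra{\,x\,}R/\fp\to R/(\fp+(x))\to 0$ then shows that each such $x$ acts invertibly on $\hh^{i}(X)$ for $i>d$. Thus $\hh^{>d}(X)$ is already a $k(\fp)$-vector space---the same vanishing you feed into your colimit presentation of $T$, but used one element at a time so that no inverse system ever appears. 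A truncation argument, one more use of maximality to bound $\injdim_{R/\fp}(\tau^{\le d}X)$, and finally the hypothesis $\Ext^{>d}_R(k(\fp),M)=0$ force $\hh^{>d}(X)=0$, which is the desired contradiction.
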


As recalled in Example \ref{ex:QQ}, the module $\Ext_R^1(k(\fp),M)$
can be quite different from $\Ext_{R}^{1}(R/\fp,M)$ and
$\Ext_{R_\fp}^1(k(\fp),M_\fp)$. Nevertheless the appearance of
$\Ext_{R}(k(\fp),-)$ in this context is not unexpected in the light of
the recent work on cosupport of complexes in ~\cite{BIK-12}; see also
the discussion around Corollary~\ref{cor:detection}.

The proof of the theorem above is given in Section~\ref{se:proof}, and
applications are presented in Section~\ref{se:applications}. One such,
discussed in Remark~\ref{rmk:cobasechange}, is a characterization of
injectivity of an $R$-module $M$ in terms of that of $\Hom_R(F,M)$,
where $F$ is a faithfully flat $R$-module. In
Section~\ref{sec:nonnoetherian}, we establish a partial extension of
this last result to certain non-noetherian rings.

\section{Proof of Theorem~\ref{thm:injdim}}
\label{se:proof}
Our standard reference for basic definitions and constructions
involving complexes is \cite{LLAHBF91}. We first recall that as a
consequence of Baer's criterion, the injective dimension of an
$R$-complex is detected by vanishing of Ext against cyclic modules.

\begin{Baer}
  Let $R$ be a commutative noetherian ring, $M$ an $R$-complex, and
  $d$ an integer. One has $\injdim_{R}M\le d$ if and only if
  \begin{equation*}
    \Ext^{i}_{R}(R/\fa,M)=0\quad\text{for every ideal $\fa$ in $R$ and
      all $i>d$}\:.
  \end{equation*}
  This result is contained in \cite[Theorem 2.4.I]{LLAHBF91}.
\end{Baer}

Consider the collection of ideals
\begin{equation*}
  \mcU := \{\fa \subset R \mid \Ext^{i}_{R}(R/\fa,M)\ne 0\ \text{ for
    some $i> d$}\}\:.
\end{equation*}
If this collection is empty, then the desired inequality,
$\injdim_{R}M \le d$, holds by Bear's criterion. Thus, we assume that
$\mcU$ is non-empty and aim for a contradiction. It is achieved by
establishing a sequence of claims, the first of which is standard but
included for convenience.

\begin{Claim1}
  With respect to inclusion, $\mcU$ is a poset and its maximal
  elements are prime ideals.
\end{Claim1}

\begin{proof}
  Let $\fa$ be a maximal element in $\mcU$. Choose a prime ideal $\fp
  \supseteq \fa$ such that $\fp/\fa$ is an associated prime of
  $R/\fa$, and pick an element $r\in R$ be such that
  $\fp=(\fa\,\colon\,r)$. The ideal $\fa+(r)$ properly contains $\fa$
  and hence is not in $\mcU$. From the exact sequence of Ext modules
  associated to the standard exact sequence
  \begin{equation*}
    0 \lra R/\fp \lra R/\fa \lra R/(\fa+(r))\lra 0
  \end{equation*}
  it follows that $\fp$ is in $\mcU$. Since $\fa$ is maximal in
  $\mcU$, the equality $\fa=\fp$ holds.
\end{proof}

Fix a maximal element $\fp$ in $\mcU$; by Claim 1 it is a prime
ideal. Set $S:=R/\fp$ and let $Q$ be the field of fractions of the
domain $S$. We proceed to analyze the $S$-complex
\begin{equation*}
  X :=\RHom_{R}(S,M)\:.
\end{equation*}

\begin{Claim2}
  The natural map $\hh^{i}(X)\to Q\otimes_{S}\hh^{i}(X)$ is an
  isomorphism for all $i > d$.
\end{Claim2}

\begin{proof}
  Fix an element $s\ne 0$ in $S$. Let $x$ be an element in $R$ whose
  residue class mod $\fp$ is $s$. By the maximality of $\fp$, the
  ideal $\fp + (x)$ is not in $\mcU$. As one has $S/(s) \cong R/(\fp +
  (x))$, it follows that $\Ext_{R}^{i}(S/(s),M)=0$ holds for all
  $i>d$. Thus, applying $\RHom_{R}(-,M)$ to the exact sequence
  \[
  0 \lra S \xra{\ s \ } S \lra S/(s) \lra 0\:,
  \]
  shows that multiplication $\hh^{i}(X) \xra{\,s\,} \hh^{i}(X)$ is an
  isomorphism for $i> d$.
\end{proof}

In the derived category over $S$, consider the triangle defining
(soft) truncations
\begin{equation}
  \label{eq:truncate}
  \tau^{\les d} X \lra X \lra \tau^{\gs d} X\lra\:.
\end{equation}

\begin{Claim3}
  There is an isomorphism $\tau^{\gs d}X\cong \hh(\tau^{\gs d}X)$ in
  the derived category over $S$, and the action of $S$ on
  $\hh(\tau^{\gs d}X)$ factors through the embedding $S\to Q$.
\end{Claim3}

\begin{proof}
  It follows from Claim 2 that the canonical morphism $\tau^{>d}X \to
  Q \otimes_S \tau^{>d}X$ yields an isomorphism in the derived
  category over $S$. The right-hand complex is one of $Q$-vector
  spaces, so it is isomorphic to its homology, and another invocation
  of Claim 2 yields the claim.
\end{proof}

\begin{Claim4}
  One has $\injdim_{S}(\tau^{\les d}X)\le d$.
\end{Claim4}

\begin{proof}
  By Baer's criterion it suffices to show that
  $\Ext^{i}_{S}(S/\fb,\tau^{\les d}X)$ vanishes for every ideal $\fb$
  in $S$ and all $i>d$. Notice first that we may assume that $\fb$ is
  non-zero, because for $i>d$ one has
  \[
  \Ext^{i}_{S}(S, \tau^{\les d}X) \cong \hh^{i}(\tau^{\les d}X)=0\:,
  \]
  where the vanishing is by construction. For $\fb \ne 0$ one has $Q
  \otimes_S S/\fb=0$, and Claim 3 together with Hom-tensor adjunction
  yields
  \begin{align*}
    \Ext^{*}_{S}(S/\fb,\tau^{\gs d}X)
    & \cong \Ext^{*}_{S}(S/\fb,\hh(\tau^{\gs d}X)) \\
    & \cong \Ext^{*}_{Q}(Q\otimes_{S}S/\fb,\hh(\tau^{\gs d}X)) \\
    & = 0\,.
  \end{align*}
  For $i>d$ the exact sequence in homology associated to
  \eqref{eq:truncate} now gives the first isomorphism below
  \begin{align*}
    \Ext^{i}_{S}(S/\fb,\tau^{\les d}X)
    &\cong \Ext^{i}_{S}(S/\fb,X) \\
    &\cong \Ext^{i}_{R}(S/\fb,M) \\
    &\cong \Ext^{i}_{R}(R/\fa,M) \\
    & = 0\:.
  \end{align*}
  The second isomorphism follows from Hom-tensor adjunction and the
  definition of $X$. The next isomorphism holds for any choice of an
  ideal $\fa$ in $R$ that reduces to $\fb$ in $S$, i.e.\ $S/\fb \cong
  R/\fa$ as $R$-modules. Since $\fb\subset S$ is non-zero, the ideal
  $\fa$ properly contains $\fp$ and hence it is not in $\mcU$. That
  explains the vanishing of Ext.
\end{proof}

\begin{Claim5}
  One has $\hh(\tau^{\gs d}X)=0$.
\end{Claim5}

\begin{proof}
  By construction one has $\hh^{i}(\tau^{\gs d}X )=0$ for $i\le
  d$. Apply $\RHom_{S}(Q,-)$ to the exact triangle
  \eqref{eq:truncate}. By Claim 3, using that $Q$-vector spaces are
  injective $S$-modules, one has
  \begin{align*}
    \Ext^*_S(Q,\tau^{\gs d}X) &\cong \Ext^*_S(Q,\hh(\tau^{\gs d}X))\\
    &\cong \Hom_S(Q,\hh(\tau^{\gs d}X))\\ &\cong \hh(\tau^{\gs d}X)\:.
  \end{align*}
  For $i>d$, Claim 4 yields $\hh^i(\RHom_S(Q,\tau^{\les d}X)) = 0$,
  and together with the computation above, this explains the first two
  isomorphisms in the next chain
  \begin{align*}
    \hh^{i}(\tau^{\gs d}X)
    & \cong \Ext^{i}_{S}(Q,\tau^{\gs d}X) \\
    &\cong \Ext^{i}_{S}(Q,X) \\
    &\cong \Ext^{i}_{R}(k(\fp),M) \\
    &=0\:.
  \end{align*}
  The third isomorphism follows from Hom-tensor adjunction, recalling
  that $Q = S_{(0)}$ as an $R$-module is $(R/\fp)_{\fp/\fp} \cong
  k(\fp)$. The vanishing of Ext is by hypothesis.
\end{proof}

Finally, from Claim 5 and \eqref{eq:truncate} one gets the second
isomorphism below
\[
\Ext^{i}_{R}(R/\fp,M) \cong \hh^{i}(X) \cong \hh^{i}(\tau^{\les
  d}X)\,;
\]
the first one holds by the definition of $X$. Thus one has
$\Ext^{i}_{R}(R/\fp,M)=0$ for all $i > d$, and this contradicts the
assumption that $\fp$ is in~$\mcU$.

This completes the proof of Theorem~\ref{thm:injdim}. \qed

\medskip

To use Theorem~\ref{thm:injdim} to verify injectivity of an $R$-module $M$ one would have to check vanishing of $\Ext^i_R(k(\fp),M)$, not only for all prime ideals $\fp$ but also for all $i>0$. However, building on this result, in recent work with  Marley~\cite{LWCSITM} we have been able to prove that it suffices to verify the vanishing for a single $i$, \emph{as long as $i$ is large enough}.
The example below illustrates that such a restriction is needed.

\begin{example}
  \label{ex:notone}
  If $R$ is a complete local ring with $\operatorname{depth} R\ge 2$,
  then one has
  \[
  \Ext_R^{1}(k(\fp),R)=0\ \text{ for every prime ideal $\fp$ in
    $R$}\:.
  \]
  Indeed, if $\fp$ is the maximal ideal of $R$, then vanishing holds
  by the assumption $\operatorname{depth} R\ge 2$, and for every
  non-maximal prime $\fp$ one has $\Ext_{R}^{i}(k(\fp),R)=0$ for all
  $i$; see \cite[Example~4.20]{BIK-12} and (\ref{cosupp}).
\end{example}

The next example illustrates that the vanishing of
$\Ext_{R}^{i}(k(\fp),R)$ does not imply that of
$\Ext_{R}^{i}(R/\fp,R)$ and $\Ext_{R_{\fp}}^{i}(k(\fp),R_{\fp})$, and
vice versa. Thus Theorem~\ref{thm:injdim} is not obviously a
consequence of Baer's criterion, nor does it subsume it.

\begin{example}
  \label{ex:QQ}
  Let $R$ be as in Example~\ref{ex:notone} and $\fp$ a prime ideal
  minimal over $(r)$ where $r$ is not a zero divisor.  In this case,
  both $\Ext_{R}^{1}(R/\fp,R)$ and
  $\Ext_{R_{\fp}}^{1}(k(\fp),R_{\fp})$ are nonzero, whilst
  $\Ext^{1}_{R}(k(\fp),R)=0$.

  On the other hand, $\Ext^1_\bbZ(\bbQ,\bbZ)$ is nonzero, whilst
  $\Ext^{1}_{\bbZ}(\bbZ,\bbZ) = 0 = \Ext^1_\bbQ(\bbQ,\bbQ)$.
\end{example}

The analogue of Theorem~\ref{thm:injdim} for flat dimension is
well-known and easier to verify.

\begin{remark}
  Let $M$ be an $R$-complex. For each prime ideal $\fp$ and integer
  $i$ there is a natural isomorphism
  \[
  \Tor_i^R(k(\fp),M)\cong \Tor_i^{R_\fp}(k(\fp),M_\fp) \,.
  \]
  It thus follows from \cite[Proposition 5.3.F]{LLAHBF91} that if
  there exists an integer $d$ such that $\Tor_i^R(k(\fp),M)=0$ for
  $i>d$ and each prime $\fp$, then the flat dimension of $M$ is at
  most $d$. However, Theorem~\ref{thm:injdim} does not follow, it
  seems, from this result by standard injective--flat duality.
\end{remark}

\section{Applications}
\label{se:applications}
We present some applications of Theorem \ref{thm:injdim}. The first
one improves \cite[Theorem~2.2]{LWCFKk} in two directions: There is no
assumption on the projective dimension of flat modules, and an
extension ring is replaced by a module.

\begin{corollary}
  \label{cor:injdim}
  For every $R$-complex $M$ and every faithfully flat $R$-module $F$
  there is an equality
  \begin{equation*}
    \injdim_R \RHom_R(F,M) = \injdim_R M\:.
  \end{equation*}
  In particular, $M$ is acyclic if and only if $\RHom_R(F,M)$ is
  acyclic.
\end{corollary}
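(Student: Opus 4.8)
The plan is to prove the equality $\injdim_R \RHom_R(F,M) = \injdim_R M$ by establishing the two inequalities separately, using Theorem~\ref{thm:injdim} as the main engine for the harder direction. Write $N := \RHom_R(F,M)$ for brevity. The key computational input is adjunction: for any prime ideal $\fp$ and any $i$ one has
\begin{equation*}
  \Ext^i_R(k(\fp), N) \cong \Ext^i_R(k(\fp) \lotimes_R F, M)\,.
\end{equation*}

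First I would dispose of the inequality $\injdim_R N \le \injdim_R M$, which does not require the theorem. If $\injdim_R M = d < \infty$, then $M$ admits a bounded injective resolution, and since $F$ is flat, $\RHom_R(F,M)$ is computed by applying $\Hom_R(F,-)$ to that resolution; because $\Hom_R(F,I)$ is injective whenever $I$ is injective (this uses only flatness of $F$, via adjunction $\Hom_R(-,\Hom_R(F,I)) \cong \Hom_R(F \otimes_R -, I)$ and exactness of $F \otimes_R -$), the complex $N$ has injective dimension at most $d$. The case $d = \infty$ is vacuous.

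The substantive direction is $\injdim_R M \le \injdim_R N$, and here the plan is to invoke Theorem~\ref{thm:injdim}. Suppose $\injdim_R N = d < \infty$; I want to conclude $\injdim_R M \le d$ by checking that $\Ext^i_R(k(\fp), M) = 0$ for every prime $\fp$ and all $i > d$. Using the adjunction isomorphism above read in the other direction, it suffices to show that the vanishing of $\Ext^i_R(k(\fp), N)$ for $i>d$ forces the vanishing of $\Ext^i_R(k(\fp), M)$. The crucial point is that $F$ is \emph{faithfully} flat: since $k(\fp)$ is a field, $k(\fp) \otimes_R F$ is a nonzero $k(\fp)$-vector space, hence a direct sum of copies of $k(\fp)$, and $\Ext^i_R(k(\fp) \otimes_R F, M)$ is correspondingly a product of copies of $\Ext^i_R(k(\fp), M)$. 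Thus $\Ext^i_R(k(\fp), N) \cong \Ext^i_R(k(\fp) \otimes_R F, M)$ vanishes if and only if $\Ext^i_R(k(\fp), M)$ does. Since $\injdim_R N \le d$ gives the vanishing of the left-hand side for all $i > d$ and all $\fp$, Theorem~\ref{thm:injdim} yields $\injdim_R M \le d$, completing the equality.

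I expect the main obstacle to be the careful handling of the derived tensor $k(\fp) \lotimes_R F$ and the bookkeeping of (co)products in the Ext isomorphism. Because $F$ is flat, $k(\fp) \lotimes_R F$ is just the module $k(\fp) \otimes_R F$ concentrated in degree zero, which removes any higher-Tor complications and lets the adjunction collapse cleanly to the honest $\Ext^i_R(k(\fp) \otimes_R F, M)$; faithful flatness is exactly what guarantees this module is nonzero so that no vanishing is lost. The final sentence about acyclicity then follows immediately by taking $d = -\infty$ (equivalently, noting that $N$ is acyclic precisely when $\injdim_R N = -\infty$), since the equality of injective dimensions specializes to that case.
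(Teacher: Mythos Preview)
Your proof is correct and follows essentially the same approach as the paper: the key computation is the adjunction isomorphism $\Ext^i_R(k(\fp),\RHom_R(F,M))\cong\Ext^i_R(k(\fp)\otimes_R F,M)$ together with the observation that $k(\fp)\otimes_R F$ is a nonzero direct sum of copies of $k(\fp)$, so the two Ext groups vanish simultaneously. The only difference is cosmetic: the paper applies Theorem~\ref{thm:injdim} symmetrically to both $M$ and $\RHom_R(F,M)$ to obtain the equality in one stroke, whereas you prove the inequality $\injdim_R\RHom_R(F,M)\le\injdim_R M$ separately by hand---this extra work is correct but unnecessary, since the biconditional on Ext-vanishing already gives both inequalities via the theorem.
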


\begin{proof}
  For every prime ideal $\fp$ in $R$ and every integer $i$ one has
  \begin{equation*}
    \Ext_{R}^{i}(k(\fp),\RHom_{R}(F,M))
    \cong \Ext_{R}^{i}(F \otimes_R k(\fp),M)
  \end{equation*}
  by adjunction and flatness of $F$. Observe that as an $R$-module
  $F\otimes_{R}k(\fp)$ is a direct sum of copies of $k(\fp)$; it is
  non-zero because $F$ is faithfully flat. It follows that
  $\Ext_{R}^{i}(k(\fp),\RHom_{R}(F,M))$ is zero if and only if
  $\Ext_{R}^{i}(k(\fp),M)$ is zero. The equality of injective
  dimensions now follows from Theorem \ref{thm:injdim}.

  In view of the equality, the statement about acyclicity is trivial
  as $M$ is acyclic if and only if $0$ is a semi-injective resolution
  of $M$ if and only if $\injdim_R M$ is $-\infty$.
\end{proof}

Let $F$ be a flat $R$-module. A module $F\otimes_RM$ is flat if $M$ is
flat, and the converse holds if $F$ is faithfully flat; this is
standard. It is equally standard that the module $\Hom_R(F,M)$ is
injective if $M$ is injective. The next remark provides something
close to a converse; Example~\ref{ex:notone} suggests that the
hypotheses are optimal.

\begin{remark}
  \label{rmk:cobasechange}
  Let $F$ be a faithfully flat $R$-module. If $M$ is an $R$-module
  with $\Ext^{i}_{R}(F,M) =0$ for all $i>0$, then $\RHom_{R}(F,M)$ is
  isomorphic to $\Hom_{R}(F,M)$ in the derived category over
  $R$. Thus, for such a module Corollary \ref{cor:injdim} asserts that
  $\Hom_R(F,M)$ is injective if and only if $M$ is injective. This
  improves the Main Theorem in \cite{LWCFKk}; see also Theorem
  \ref{thm:nninj}.
\end{remark}

The only other result in this direction we are aware of is the Main
Theorem in \cite{LWCFKk}. It deals with the special case where $F$ is
a faithfully flat $R$-algebra, and the proof relies heavily on
\cite[Theorem~4.5]{BIK-12} in the form recovered by
Corollary~\ref{cor:detection}.

This points to our next application, which involves the notion of
cosupport introduced in \cite{BIK-12}, in a form justified by
\cite[Proposition~4.4]{BIK-12}.  The \emph{cosupport} of an
$R$-complex $M$ is the subset of $\Spec R$ given by
\begin{equation}
  \label{cosupp}
  \cosupp_{R}M = \{\fp\in\Spec R\mid \hh(\RHom_R(k(\fp),M)) \ne 0\}\:.
\end{equation}

The next result is \cite[Theorem~4.5]{BIK-12} applied to the derived
category over $R$. The proof of \emph{op.~cit.}\ builds on the
techniques developed in~\cite{BIK-08,BIK-12} to apply to triangulated
categories equipped with ring actions.

\begin{corollary}
  \label{cor:detection}
  An $R$-complex $M$ has $\cosupp_{R}M=\emptyset$ if and only if
  $\hh(M)=0$.
\end{corollary}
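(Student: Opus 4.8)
The plan is to deduce both implications directly from Theorem~\ref{thm:injdim}, mirroring the argument used in the proof of Corollary~\ref{cor:injdim}. The crucial observation is that the condition $\cosupp_R M = \emptyset$ unwinds, via the definition \eqref{cosupp}, to the vanishing $\Ext^i_R(k(\fp),M) = 0$ for \emph{every} prime $\fp$ and \emph{every} integer $i$---not just for $i$ above some fixed threshold. This stronger, unbounded vanishing is what makes the theorem applicable with arbitrarily small $d$.

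First I would dispose of the easy implication. If $\hh(M) = 0$, then $M$ is isomorphic to $0$ in the derived category over $R$, so $\RHom_R(k(\fp),M)$ is acyclic for every prime $\fp$; hence no prime lies in the cosupport, and $\cosupp_R M = \emptyset$. For the substantive direction, I would suppose $\cosupp_R M = \emptyset$ and observe that the hypothesis of Theorem~\ref{thm:injdim} is then met for \emph{every} integer $d$, since $\Ext^i_R(k(\fp),M)$ vanishes for all $i$ and all $\fp$. The theorem therefore yields $\injdim_R M \le d$ for every $d \in \bbZ$, and since this bound holds for arbitrarily small $d$ it forces $\injdim_R M = -\infty$. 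As recorded in the proof of Corollary~\ref{cor:injdim}, an injective dimension of $-\infty$ is equivalent to $M$ being acyclic, that is, to $\hh(M) = 0$.

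I do not expect a genuine obstacle here: once Theorem~\ref{thm:injdim} is in hand, the corollary is a one-line application obtained by letting $d$ be unbounded below. The only point requiring a word of care is the passage from ``$\injdim_R M \le d$ for all $d$'' to ``$M$ is acyclic,'' which rests on the standard fact that a complex whose injective dimension is bounded above by arbitrarily small integers admits $0$ as a semi-injective resolution. This is precisely the equivalence already invoked in Corollary~\ref{cor:injdim}, so no new ingredient is needed.
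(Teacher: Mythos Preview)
Your proposal is correct and follows essentially the same approach as the paper: the paper's proof is the one-liner that the ``if'' is trivial and the converse follows from Theorem~\ref{thm:injdim} together with the fact that $\hh^i(M)\ne 0$ implies $\injdim_R M \ge i$. Your version is slightly more verbose but uses the same idea, phrasing the conclusion as $\injdim_R M = -\infty$ rather than via the contrapositive.
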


\begin{proof}
  The ``if'' is trivial, and the converse holds by
  Theorem~\ref{thm:injdim} when one recalls that $\hh^i(M)\ne 0$
  implies $\injdim_R M \ge i$.
\end{proof}

\begin{remark}
  One can deduce the preceding corollary also from Neeman's
  classification~\cite[Theorem~2.8]{ANm92} of the localizing
  subcategories of the derived category over $R$. Indeed, the
  subcategory of the derived category consisting of $R$-complexes $X$
  with $\Ext^{*}_{R}(X,M)=0$ is localizing. Thus, if it contains
  $k(\fp)$ for each $\fp$ in $\Spec R$, then it must contain $R$, by
  \emph{op.~cit.}, that is to say, $\hh(M)=0$.

  Conversely, Corollary~\ref{cor:detection} can be used to deduce
  Neeman's classification, by mimicking the proof of
  \cite[Theorem~6.1]{BIKP}. The crucial additional observation needed
  to do so is that for $R$-complexes $M$ and $N$, there is an equality
  \[
  \cosupp_{R}\RHom_{R}(M,N) = \supp_{R}M \cap \cosupp_{R}N\:.
  \]
  It follows from two applications of the standard adjunction:
  \begin{align*}
    \hh(\RHom_R(k(\fp),&\RHom_R(M,N))) \\
    &\cong \hh(\RHom_{k(\fp)}(k(\fp)\lotimes_R M ,\RHom_R(k(\fp),N))) \\
    &\cong \Hom_{k(\fp)}(\hh(k(\fp)\lotimes_R M)
    ,\hh(\RHom_R(k(\fp),N)))\:.
  \end{align*}
\end{remark}

\section{Non-noetherian rings}
\label{sec:nonnoetherian}
In this section we establish, over certain not necessarily noetherian
rings, a characterization of injective modules in the vein of
\cite{LWCFKk}; see also Remark~\ref{rmk:cobasechange}. This involves
the following invariant:
\begin{equation*}
  \splf R = \sup\{\projdim_RF \mid F\ \text{\rm is a flat $R$-module}\}\:.
\end{equation*}
A direct sum of flat modules is flat with $\projdim(\bigoplus_{i\in
  I}F_i) = \sup_{i\in I}\{\projdim F_i\}$, so the invariant $\splf R$
is finite if and only if every flat $R$-module has finite projective
dimension. With a nod to Bass' \cite[Theorem P]{HBs60}, a ring with
$\splf R \le d$ is also called a \emph{$d$-perfect} ring. If $R$ has
cardinality at most $\aleph_n$ for some natural number $n$, then one
has $\splf R \le n+1$ by a result of Gruson and Jensen
\cite[Theorem~7.10]{LGrCUJ81}. Osofsky \cite[3.1]{BLO70} has examples
of rings for which the splf invariant is infinite.

\begin{lemma}
  \label{lem:bounded}
  Let $R$ be a commutative ring with $\splf R < \infty$ and let $S$ be
  a faithfully flat $R$-algebra. An $R$-complex $M$ with
  $\hh^{i}(M)=0$ for all $i \gg 0$ is acyclic if and only if
  $\RHom_R(S,M)$ is acyclic.
\end{lemma}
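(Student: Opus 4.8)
The ``only if'' direction is immediate, since $\RHom_R(S,M)\simeq 0$ whenever $M$ is acyclic. For the converse I would fix $d:=\splf R$ and, assuming $\RHom_R(S,M)$ acyclic, set $t:=\sup\{i\mid \hh^i(M)\ne 0\}$, which is finite by hypothesis; the goal is to show $t=-\infty$. The engine of the proof is faithfully flat descent. Since $S$ is a faithfully flat $R$-algebra, the augmented Amitsur complex
\[
0\lra R\lra S\lra S\otimes_R S\lra S\otimes_R S\otimes_R S\lra\cdots
\]
is exact, so the unaugmented complex $C=(S\to S\otimes_R S\to\cdots)$, placed in cohomological degrees $\ge 0$, satisfies $R\simeq C$ in the derived category over $R$.

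The first point I would record is that every term of $C$ is invisible to $\RHom_R(-,M)$. Writing $S^{\otimes n}$ for the $n$-fold tensor power, flatness of $S$ and the derived adjunction give
\[
\RHom_R(S^{\otimes(p+1)},M)\simeq \RHom_R\bigl(S^{\otimes p},\RHom_R(S,M)\bigr)\simeq 0
\]
for every $p\ge 0$, the last step being the standing assumption. Consequently, for each $n$ the brutally truncated complex $C_{\le n}=(S\to\cdots\to S^{\otimes(n+1)})$, being a \emph{bounded} complex all of whose terms are killed by $\RHom_R(-,M)$, satisfies $\RHom_R(C_{\le n},M)\simeq 0$; here boundedness is what matters, since it lets me filter $C_{\le n}$ by its finitely many terms and avoid any convergence question.

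Next I would exploit the shape of $C_{\le n}$. Because the augmented complex is exact, for $n\ge 1$ the complex $C_{\le n}$ has cohomology only in degrees $0$ and $n$, namely $\hh^0(C_{\le n})\cong R$ and $\hh^n(C_{\le n})\cong W_n$, where $W_n$ is the image of the differential $S^{\otimes(n+1)}\to S^{\otimes(n+2)}$. A descent argument shows $W_n$ is flat over $R$: the Amitsur complex becomes contractible after $-\otimes_R S$, so $W_n\otimes_R S$ is a direct summand of a flat $S$-module, and flatness descends along the faithfully flat map $R\to S$. The truncation triangle $R\to C_{\le n}\to W_n[-n]\to$, upon applying $\RHom_R(-,M)$ and using $\RHom_R(C_{\le n},M)\simeq 0$, then yields an isomorphism
\[
M\;\simeq\;\RHom_R(W_n,M)[n+1]\qquad\text{in the derived category over }R.
\]

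Finally I would cash this in against $\splf R<\infty$. Since $W_n$ is flat, $\projdim_R W_n\le d$, so $\RHom_R(W_n,M)$ has cohomology concentrated in degrees $\le t+d$; the displayed isomorphism therefore forces $\hh^i(M)=0$ for all $i> t+d-n-1$. As this holds for every $n\ge 1$, letting $n\to\infty$ gives $\hh(M)=0$, that is, $M$ is acyclic. The one genuine obstacle, as I see it, lies exactly in the last two steps: the naive route---invoking $R\simeq C$ and computing $\RHom_R(C,M)$ by the spectral sequence of its columns---breaks down because $C$ is unbounded and the pertinent total complex is a product, so that spectral sequence need not converge. Truncating at each finite stage $n$ sidesteps this, but introduces the error term $W_n$; it is precisely the finiteness of $\splf R$ that bounds $\projdim_R W_n$ \emph{uniformly in $n$}, allowing the shift $[n+1]$ to be driven off to $-\infty$. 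Both hypotheses of the lemma ($\splf R<\infty$ and $\hh^i(M)=0$ for $i\gg 0$) are used here, and essentially only here.
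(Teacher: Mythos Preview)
Your proof is correct, and it lands on the same endgame as the paper: produce an isomorphism $M\simeq \Sigma^{k}\RHom_R(F,M)$ with $F$ flat and $k$ arbitrarily large, then invoke $\projdim_R F\le d=\splf R$ together with $\hh^i(M)=0$ for $i\gg 0$. But you get there by a different and longer road. The paper avoids the Amitsur complex entirely: from the short exact sequence $0\to R\to S\to S/R\to 0$ and the hypothesis $\RHom_R(S,M)\simeq 0$ one reads off $M\simeq \Sigma\,\RHom_R(S/R,M)$; iterating $d+1$ times and using adjunction with flatness of $S/R$ gives $M\simeq \Sigma^{d+1}\RHom_R\bigl((S/R)^{\otimes(d+1)},M\bigr)$, and one finishes exactly as you do. The gain is twofold: flatness of $(S/R)^{\otimes(d+1)}$ is immediate (no descent argument for your $W_n$ is needed), and a single value of the shift, $d+1$, already suffices---there is no ``$n\to\infty$''. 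Your route via truncations of the Amitsur resolution is conceptually pleasant and makes explicit why the naive spectral-sequence argument fails, but the paper's iteration of the two-term sequence is strictly more economical.
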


\begin{proof}
  The ``only if'' is trivial, so assume that $\RHom_R(S,M)$ is
  acyclic. As $\hh(M)$ is bounded above, we may assume that
  $\hh^i(M)=0$ holds for all $i>0$, and it suffices to prove that also
  $\hh^0(M)=0$. Set $d := \splf R$.

  Application of $\RHom_R(-,M)$ to the exact sequence \mbox{$0 \to R
    \to S \to S/R \to 0$} yields $M \cong \Sigma\RHom_R(S/R,M)$ in the
  derived category over $R$. Repeated use of this isomorphism and
  adjunction yields $M \cong \Sigma^{d+1}\RHom_R((S/R)^{\otimes
    d+1},M)$. As $S$ is faithfully flat over $R$, the module $S/R$ is
  flat, and hence so are its tensor powers. Thus, the module
  $(S/R)^{\otimes d+1}$ has projective dimension at most $d$ and,
  therefore, $\hh^i(\RHom_R((S/R)^{\otimes d+1},M))=0$ holds for all
  $i>d$. In particular,
  \begin{align*}
    \hh^0(M) &\cong \hh^0(\Sigma^{d+1}\RHom_R((S/R)^{\otimes d+1},M))\\
    & = \hh^{d+1}(\RHom_R((S/R)^{\otimes d+1},M))\\
    &= 0\:.\qedhere
  \end{align*}
\end{proof}

\begin{proposition}
  \label{prp:trivial}
  Let $R$ be a commutative ring with $\splf R < \infty$ and let $S$ be
  a faithfully flat $R$-algebra of projective dimension at most
  $1$. An $R$-complex $M$ is acyclic if and only if $\RHom_R(S,M)$ is
  acyclic.
\end{proposition}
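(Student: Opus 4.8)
The plan is to deduce the unbounded statement from Lemma~\ref{lem:bounded} by a soft--truncation argument in which the hypothesis $\projdim_R S\le 1$ is used to control cohomological amplitudes. The ``only if'' is trivial, so I would assume that $\RHom_R(S,M)$ is acyclic and prove that the truncation $\tau^{\les n}M$ is acyclic for every integer $n$; this gives $\hh^i(M)=0$ for all $i$, as desired.

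Fix an integer $n$ and consider the truncation triangle
\[
  \tau^{\les n}M \lra M \lra \tau^{\gs n}M \lra \Sigma\,\tau^{\les n}M
\]
in the derived category over $R$. Applying the (triangulated) functor $\RHom_R(S,-)$ and using that $\RHom_R(S,M)$ is acyclic produces an isomorphism $\RHom_R(S,\tau^{\les n}M)\cong \Sigma^{-1}\RHom_R(S,\tau^{\gs n}M)$. The heart of the argument is then to locate the homology of $\RHom_R(S,\tau^{\les n}M)$ from two sides. Since $\projdim_R S\le 1$, the functor $\RHom_R(S,-)$ raises cohomological degree by at most one --- this is the two-column hyper-Ext spectral sequence $\Ext^p_R(S,\hh^q(-))\Rightarrow \hh^{p+q}(\RHom_R(S,-))$ with $p\in\{0,1\}$, or equivalently a length-one projective resolution of $S$. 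As $\tau^{\les n}M$ has homology concentrated in degrees $\le n$, it follows that $\RHom_R(S,\tau^{\les n}M)$ has homology concentrated in degrees $\le n+1$. On the other hand, $\tau^{\gs n}M$ has homology in degrees $\ge n+1$, so $\RHom_R(S,\tau^{\gs n}M)$ has homology in degrees $\ge n+1$ (a lower bound that needs no hypothesis on $\projdim_R S$), whence its shift $\Sigma^{-1}\RHom_R(S,\tau^{\gs n}M)$ has homology in degrees $\ge n+2$. The isomorphism above forces the homology of $\RHom_R(S,\tau^{\les n}M)$ into the empty range, so this complex is acyclic.

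This is precisely the step where $\projdim_R S\le 1$ is essential, and it is the point I expect to require the most care: a bound $\projdim_R S\le p$ would only place the homology of $\RHom_R(S,\tau^{\les n}M)$ in degrees $\le n+p$, and disjointness from the range $\ge n+2$ coming from the shifted triangle forces $p\le 1$. To finish, I would observe that $\tau^{\les n}M$ has homology bounded above, so Lemma~\ref{lem:bounded} applies (here $\splf R<\infty$ is used): the acyclicity of $\RHom_R(S,\tau^{\les n}M)$ just established yields that $\tau^{\les n}M$ is acyclic. Therefore $\hh^i(M)=0$ for all $i\le n$, and since $n$ was arbitrary, $M$ is acyclic. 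All steps apart from the two-sided amplitude estimate are formal.
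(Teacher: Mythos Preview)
Your argument is correct. Both your proof and the paper's reduce to Lemma~\ref{lem:bounded} by showing that $\RHom_R(S,\tau^{\les n}M)$ is acyclic for each $n$, but the route to that acyclicity differs. The paper fixes a semi-injective resolution $I$ of a shift of $M$, uses acyclicity of $\Hom_R(S,I)$ to identify $\tau^{\les 1}\Hom_R(S,I)=\Hom_R(S,\tau^{\les 1}I)$ and to extract the vanishing $\Ext^1_R(S,\zz^1(I))=0$, and then invokes an explicit lemma about mapping cones to conclude that $\RHom_R(S,\tau^{\les 1}I)\cong\Hom_R(S,\tau^{\les 1}I)$ is acyclic. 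Your approach is more categorical: you feed the truncation triangle through $\RHom_R(S,-)$ and pit the two amplitude estimates---homology of $\RHom_R(S,\tau^{\les n}M)$ in degrees $\le n+1$ from $\projdim_R S\le 1$, versus homology of $\Sigma^{-1}\RHom_R(S,\tau^{\gs n}M)$ in degrees $\ge n+2$---against each other. This is slicker and isolates exactly where the bound $\projdim_R S\le 1$ enters; the paper's proof has the virtue of being completely explicit at the level of complexes. Your parenthetical that the lower bound needs no hypothesis on $\projdim_R S$ is justified because $\tau^{\gs n}M$ is a bounded-below complex and hence admits a K-injective resolution concentrated in degrees $\ge n+1$; you might say this rather than gesture at the spectral sequence, which for the \emph{upper} bound is genuinely what does the work.
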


\begin{proof}
  The ``only if'' is trivial, so assume that $\RHom_R(S,M)$ is
  acyclic. To prove that $M$ is acyclic, we show that
  $\hh^0(\Sigma^nM)=0$ holds for all $n\in\bbZ$. Fix $n$ and let
  $\Sigma^nM \to I$ be a semi-injective resolution; the assumption is
  now $\hh(\Hom_R(S,I))=0$ and the goal is to prove $\hh^0(I)=0$.

  The soft truncation
  \begin{equation*}
    \tau^{\le 1}\Hom_R(S,I) = \cdots \to \Hom_R(S,I)^{-1}\to \Hom_R(S,I)^{0} \to \zz^1(\Hom_R(S,I)) \to 0
  \end{equation*}
  is acyclic, and by left-exactness of Hom one has $\tau^{\le
    1}\Hom_R(S,I) = \Hom_R(S,\tau^{\le 1}I)$. Further, still by
  acyclicity of $\Hom_R(S,I)$, there is an equality
  \[
  \bb^2(\Hom_R(S,I)) = \Hom_R(S,\bb^2(I))\,.
  \]
  Thus, the functor $\Hom_R(S,-)$ leaves the sequence $0 \to \zz^1(I)
  \to I^1 \to \bb^2(I) \to 0$ exact, and that implies vanishing of
  $\Ext_R^1(S,\zz^1(I))$.

  Let $\pi\colon P \to S$ be a projective resolution over $R$ with
  $P_i=0$ for $i >1$. Consider its mapping cone
  \[
  A = 0 \lra P_1 \lra P_0 \lra S \lra 0\,.
  \]
  As $\Hom_R(A,I^n)$ is exact for every $n$ and $\Hom_R(A,\zz^1(I))$
  is exact by vanishing of $\Ext_R^1(S,\zz^1(I))$, it follows from
  \cite[Lemma (2.5)]{CFH-06} that $\Hom_R(A,\tau^{\le 1}I)$ is
  acyclic. Thus, $\Hom_R(\pi,\tau^{\le 1}I)$ yields an isomorphism
  $\RHom_R(S,\tau^{\le 1}I) \cong \Hom_R(S,\tau^{\le 1}I)$ in the
  derived category, and the latter complex is acyclic. Now
  Lemma~\ref{lem:bounded} yields $\hh(\tau^{\le 1}I)=0$, in particular
  $\hh^0(I) = \hh^0(\tau^{\le 1}I) = 0$.
\end{proof}

\begin{theorem}
  \label{thm:nninj}
  Let $R$ be a commutative ring with $\splf R < \infty$, let $S$ be a
  faithfully flat $R$-algebra of projective dimension at most $1$, and
  let $M$ be an $R$-module. If $\Ext^1_R(S,M)=0$ and the $S$-module
  $\Hom_R(S,M)$ is injective, then $M$ is injective.
\end{theorem}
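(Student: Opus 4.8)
The plan is to verify Baer's criterion for $M$ over $R$: it suffices to show that $\Ext^1_R(R/\fa,M)=0$ for every ideal $\fa$, as this alone forces $M$ to be injective. The two hypotheses are digested as follows. Since $\projdim_R S\le 1$ and $\Ext^1_R(S,M)=0$, the canonical map $\RHom_R(S,M)\to\Hom_R(S,M)$ is an isomorphism in the derived category; write $N:=\Hom_R(S,M)$. By assumption $N$ is injective over $S$, and I would first record the upgrade that $N$ is then injective over $R$ as well: for any ideal $\fa$, adjunction and flatness of $S$ give $\RHom_R(R/\fa,N)\cong\RHom_S(S/\fa S,N)$, and the right-hand side is concentrated in degree $0$ because $N$ is $S$-injective, so $N$ passes Baer's criterion over $R$.

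Next I would set up a dimension shift. For every $R$-module $L$, adjunction yields
\[
\RHom_R(S\otimes_R L,M)\cong\RHom_S(S\otimes_R L,N),
\]
which is again concentrated in degree $0$ by $S$-injectivity of $N$; hence $\Ext^{i}_R(S\otimes_R L,M)=0$ for all $i\ge 1$ and all $L$. Tensoring the short exact sequence $0\to R\to S\to S/R\to 0$ (with $S/R$ flat) by $L$ produces $0\to L\to S\otimes_R L\to (S/R)\otimes_R L\to 0$, and applying $\RHom_R(-,M)$ together with the vanishing just obtained gives $\Ext^{i}_R(L,M)\cong\Ext^{i+1}_R((S/R)\otimes_R L,M)$ for all $i\ge 1$. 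Iterating $k$ times specializes to
\[
\Ext^{1}_R(R/\fa,M)\cong\Ext^{1+k}_R\bigl((S/R)^{\otimes k}\otimes_R R/\fa,\,M\bigr)
\]
for every $k\ge 0$. Everything thus reduces to making the right-hand group vanish for some large $k$.

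This last step is where $\splf R<\infty$ must enter, and I expect it to be the main obstacle. Since $(S/R)^{\otimes k}$ is flat it has $\projdim_R(S/R)^{\otimes k}\le\splf R=:d$; the difficulty is that $(S/R)^{\otimes k}\otimes_R R/\fa$ need not have projective dimension bounded independently of the possibly infinite projective dimension of $R/\fa$, so one cannot merely invoke $1+k>d$. To sidestep this I would study the bounded complexes $M_k:=\RHom_R((S/R)^{\otimes k},M)$, whose cohomology is confined to degrees $[0,d]$ (by $\projdim_R(S/R)^{\otimes k}\le d$) and which satisfy $\RHom_R(S,M_k)\cong\Hom_R(S\otimes_R(S/R)^{\otimes k},M)$, an $S$-injective module placed in degree $0$. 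The aim would be to feed a suitable truncation of $M_k$ into Proposition~\ref{prp:trivial} or Lemma~\ref{lem:bounded} (both apply, as the cohomology is bounded above) so as to force the relevant higher cohomology to vanish uniformly in $k$. The genuine technical point—and the reason $\projdim_R S\le 1$ is imposed rather than $S$ being projective—is that $\RHom_R(S,-)$ does not commute with truncation: the discrepancy is governed by an $\Ext^1_R(S,-)$ term on the pertinent syzygy, exactly as in the proof of Proposition~\ref{prp:trivial}, and taming this term by means of $\splf R<\infty$ is the crux. Once the displayed Ext group is shown to die for $k\gg 0$, Baer's criterion delivers the injectivity of $M$.
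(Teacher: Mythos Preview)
Your reduction in the first two paragraphs is sound: the identification $\RHom_R(S,M)\cong N$, the $R$-injectivity of $N$, the vanishing of $\Ext^{\ge 1}_R(S\otimes_R L,M)$ for every module $L$, and the resulting dimension shift
\[
\Ext^{1}_R(R/\fa,M)\;\cong\;\Ext^{1+k}_R\bigl((S/R)^{\otimes k}\otimes_R R/\fa,\,M\bigr)
\]
are all correct. This is essentially the mechanism behind Lemma~\ref{lem:bounded}, redone in the present setting.

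The gap is in the final paragraph. You correctly diagnose the obstacle---$(S/R)^{\otimes k}\otimes_R R/\fa$ need not have finite projective dimension---but what follows is a plan, not an argument. Proposition~\ref{prp:trivial} and Lemma~\ref{lem:bounded} detect \emph{acyclicity}, and neither $M_k$ nor any evident truncation of it becomes acyclic after applying $\RHom_R(S,-)$: for instance $\RHom_R(S,\tau^{\ge 1}M_k)$ has a potentially nonzero $H^0$ controlled by $\Ext^1_R(S,H^0(M_k))$, and nothing you have set up forces that term to vanish. Put differently, your shift shows $\injdim_R M\le 0$ if and only if $\injdim_R M_k\le k$, but the information $\hh^{>d}(M_k)=0$ is far weaker than $\injdim_R M_k\le k$, and you have not bridged the two. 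You flag this yourself (``taming this term \ldots\ is the crux'') and stop; the proposal ends exactly where the essential difficulty begins.

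The paper's own proof is not self-contained either: it simply asserts that the argument for \cite[Theorem~1.7]{LWCFKk} goes through verbatim once the acyclicity-detection step there (a cosupport result from \cite{BIK-12}) is replaced by Proposition~\ref{prp:trivial}. So the intended route is to follow that external argument and apply Proposition~\ref{prp:trivial} to the complex constructed there, rather than to iterate a dimension shift and hope for termination.
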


\begin{proof}
  The proof of \cite[Theorem~1.7]{LWCFKk} applies with one
  modification: in place of \cite[1.5]{LWCFKk}---at heart a reference
  to \cite[Theorem~4.5]{BIK-12}---one invokes Proposition
  \ref{prp:trivial}.
\end{proof}

\begin{remark}
  The assumption in Theorem~\ref{thm:nninj} that the flat $R$-algebra
  $S$ has projective dimension at most $1$ is satisfied if
  \begin{itemize}
  \item $R$ is countable; see \cite[Theorem~7.10]{LGrCUJ81}.
  \item $S$ is countably related; in particular, if every ideal in $R$
    is countably generated, and $S$ is countably generated as an
    $R$-module; see Osofsky~\cite[Lemma 1.2]{BLO68} and
    Jensen~\cite[Lemma 2]{CUJ66}.
  \end{itemize}
\end{remark}

\section*{Acknowledgments}

L.W.C.\ thanks Fatih K\" oksal for conversations related to this work;
the joint paper \cite{LWCFKk} provided much inspiration. S.B.I.\
thanks Dave Benson, Henning Krause, and Julia Pevtsova for discussions
related to this work. The statement of Theorem~\ref{thm:injdim}
emerged out of an on-going collaboration with them. We also thank Tom
Marley for pointing out an error in an earlier version of Remark 2.3.

\providecommand{\arxiv}[2][AC]{\mbox{\href{http://arxiv.org/abs/#2}{\sf
      arXiv:#2 [math.#1]}}}
\providecommand{\MR}[1]{\mbox{\href{http://www.ams.org/mathscinet-getitem?mr=#1}{#1}}}
\renewcommand{\MR}[1]{\mbox{\href{http://www.ams.org/mathscinet-getitem?mr=#1}{#1}}}
\providecommand{\MR}{\relax\ifhmode\unskip\space\fi MR }
% \MRhref is called by the amsart/book/proc definition of \MR.
\providecommand{\MRhref}[2]{%
  \href{http://www.ams.org/mathscinet-getitem?mr=#1}{#2} }
\providecommand{\href}[2]{#2}

\end{document}